\date{26 September 2008}

\documentclass[11pt,reqno]{amsart}
\usepackage{latexsym,amsmath,amsfonts,amscd,amssymb}
\usepackage{graphics}

\setlength{\oddsidemargin}{15pt} \setlength{\evensidemargin}{15pt}
\setlength{\textwidth}{420pt} \setlength{\textheight}{630pt}
\setlength{\topmargin}{0pt}

\setlength{\parskip}{.15cm} \setlength{\baselineskip}{.5cm}

\theoremstyle{plain}
\newtheorem{theorem}{Theorem}[section]
\newtheorem{corollary}[theorem]{Corollary}
\newtheorem{lemma}[theorem]{Lemma}
\newtheorem{proposition}[theorem]{Proposition}
\theoremstyle{definition}

\numberwithin{equation}{section}

\newcommand{\RR}{\mathbb{R}}
\newcommand{\CC}{\mathbb{C}}

\newcommand{\PP}{\mathbb{P}}

\newcommand{\ZZ}{\mathbb{Z}}

\newcommand{\x}{\times}

\newcommand{\la}{\langle}
\newcommand{\ra}{\rangle}

\newcommand{\Id}{\mathrm{Id}}

\DeclareMathOperator{\Hom}{Hom}

\DeclareMathOperator{\tr}{tr}

\newcommand{\SL}{\mathrm{SL}(2,{\mathbb{C}})}


\title{The $\SL$-character varieties of torus knots}

\subjclass[2000]{Primary: 14D20. Secondary: 57M25, 57M27.}
\keywords{Torus knot, characters, representations.}

\author{Vicente Mu\~noz}
  \address{Instituto de Ciencias Matem{\'a}ticas CSIC-UAM-UC3M-UCM \\
  Consejo Superior de Investigaciones Cient{\'\i}ficas \\ Serrano 113 bis
  \\ 28006 Madrid \\ Spain}
  \address{Facultad de Matem\'{a}ticas \\ Universidad Complutense
  de Madrid \\ Plaza Ciencias 3
  \\ 28040 Madrid \\ Spain}
  \email{vicente.munoz@imaff.cfmac.csic.es}

\thanks{Partially supported through grant MEC
(Spain) MTM2007-63582}

\begin{document}
\maketitle

\begin{abstract}
  Let $G$ be the fundamental group of the complement of the
  torus knot of type $(m,n)$. This has a presentation $G=\la x,y \, |
  \, x^m=y^n\ra$. We find the geometric description of the character
  variety $X(G)$ of characters of representations of $G$ into $\SL$.
\end{abstract}

\bigskip

\bigskip

Since the foundational work of Culler and Shalen \cite{CS}, 
the varieties of $\SL$-characters have been extensively studied. 
Given a manifold $M$, the variety of representations of
$\pi_1(M)$ into $\SL$ and the variety of characters of such representations both
contain information of the topology of $M$. This is specially
interesting for $3$-dimensional manifolds, where the fundamental
group and the geometrical properties of the manifold are 
strongly related.

This can be used to study knots $K\subset S^3$, by analysing the
$\SL$-character variety of the fundamental group of the knot complement
$S^3-K$. In this paper, we study the case of the torus knots $K_{m,n}$ 
of any type $(m,n)$. The case $(m,n)=(m,2)$ was analysed in \cite{Oller} and
the general case was recently determined in \cite{Martin-Oller} by 
a method different from ours.

\bigskip

\noindent \textbf{Acknowledgements.} \
The author wishes to thank the referee for useful comments, specially 
for pointing out the reference \cite{Martin-Oller}.

\section{Character varieties} \label{sec:introduction}

A \textit{representation} of a group $G$ in $\SL$ is a
homomorphism $\rho: G\to \SL$.
Consider a finitely presented group $G=\la x_1,\ldots, x_k | r_1,\ldots, r_s \ra$,
and let $\rho: G\to \SL$ be a representation. Then $\rho$ is completely
determined by the $k$-tuple $(A_1,\ldots, A_k)=(\rho(x_1),\ldots, \rho(x_k))$
subject to the relations $r_j(A_1,\ldots, A_k)=0$, $1\leq j \leq s$. Using
the natural embedding $\SL\subset \CC^4$, we can identify the space of
representations as
 \begin{eqnarray*}
 R(G) &=& \Hom(G, \SL) \\
  &=& \{(A_1,\ldots, A_k) \in \SL^k \, | \,
 r_j(A_1,\ldots, A_k)=0, \,  1\leq j \leq s \}\subset \CC^{4k}\, .
 \end{eqnarray*}
Therefore $R(G)$ is an affine algebraic set.

We say that two representations $\rho$ and $\rho'$ are
equivalent if there exists $P\in \SL$ such that $\rho'(g)=P^{-1} \rho(g) P$,
for every $g\in G$.
This produces an action of $\SL$ in $R(G)$. The moduli space of representations
is the GIT quotient
 $$
 M(G) = \Hom(G, \SL) // \SL \, .
 $$

A representation $\rho$ is \textit{reducible} if the elements of
$\rho(G)$ all share a common eigenvector, otherwise $\rho$ is
\textit{irreducible}.

Given a representation $\rho: G\to \SL$, we define its
\textit{character} as the map $\chi_\rho: G\to \CC$,
$\chi_\rho(g)=\tr \rho (g)$. Note that two equivalent
representations $\rho$ and $\rho'$ have the same character, and
the converse is also true if $\rho$ or $\rho'$ is irreducible
\cite[Prop.\ 1.5.2]{CS}.

There is a character map $\chi: R(G)\to \CC^G$, $\rho\mapsto
\chi_\rho$, whose image
 $$
 X(G)=\chi(R(G))
 $$
is called the \textit{character variety of $G$}. Let us give
$X(G)$ the structure of an algebraic variety. By the results of
\cite{CS}, there exists a collection $g_1,\ldots, g_a$ of elements of
$G$ such that $\chi_\rho$ is determined by $\chi_\rho(g_1),\ldots,
\chi_\rho(g_a)$, for any $\rho$. Such collection gives a map
 $$
  \Psi:R(G)\to \CC^a\, , \qquad
  \Psi(\rho)=(\chi_\rho(g_1),\ldots, \chi_\rho(g_a))\, .
 $$
We have a bijection $X(G)\cong \Psi(R(G))$. This endows $X(G)$ with
the structure of an algebraic variety. Moreover, this is
independent of the chosen collection as proved in \cite{CS}.

\begin{lemma}\label{lem:MyX}
  The natural algebraic map $M(G)\to X(G)$ is a bijection.
\end{lemma}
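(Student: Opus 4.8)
The plan is to exhibit the map as the morphism induced by the universal property of the GIT quotient, and then to verify bijectivity on points directly. Since conjugate representations have the same character, the morphism $\Psi:R(G)\to\CC^a$ constructed above is $\SL$-invariant, so it factors through the quotient map $\pi:R(G)\to M(G)$, giving an algebraic map $\phi:M(G)\to X(G)$ with $\phi\circ\pi=\Psi$; this $\phi$ is the natural map of the statement. Surjectivity is then immediate: because $X(G)=\chi(R(G))=\Psi(R(G))$ by definition and $\Psi=\phi\circ\pi$ with $\pi$ surjective, every point of $X(G)$ lies in the image of $\phi$.

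The content is injectivity, and here I would use the standard fact from geometric invariant theory that the points of the affine quotient $M(G)=R(G)//\SL$ are in bijection with the closed $\SL$-orbits in $R(G)$, each fiber of $\pi$ containing a unique such orbit, and that for the conjugation action a representation lies on a closed orbit exactly when it is semisimple (completely reducible). So two points of $M(G)$ are represented by semisimple $\rho_1,\rho_2$, and it suffices to show that $\chi_{\rho_1}=\chi_{\rho_2}$ forces $\rho_1$ and $\rho_2$ to be conjugate. If one of them is irreducible this is exactly \cite[Prop.\ 1.5.2]{CS}, which moreover forces both to be irreducible; so the remaining case is that both are reducible and semisimple. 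Such a representation into $\SL$ is conjugate to a diagonal one, so I may write $\rho_1=\mathrm{diag}(\lambda,\lambda^{-1})$ and $\rho_2=\mathrm{diag}(\mu,\mu^{-1})$ for homomorphisms $\lambda,\mu:G\to\CC^*$, and equality of characters gives $\lambda(g)+\lambda(g)^{-1}=\mu(g)+\mu(g)^{-1}$, i.e. $\{\lambda(g),\lambda(g)^{-1}\}=\{\mu(g),\mu(g)^{-1}\}$ for every $g$.

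The decisive observation is that $S_+=\{g:\mu(g)=\lambda(g)\}=\ker(\mu\lambda^{-1})$ and $S_-=\{g:\mu(g)=\lambda(g)^{-1}\}=\ker(\mu\lambda)$ are subgroups of $G$ whose union is all of $G$; since a group is never the union of two proper subgroups, either $\mu=\lambda$ or $\mu=\lambda^{-1}$ identically. In either case $\rho_1$ and $\rho_2$ are conjugate, either by the identity or by the Weyl element $\left(\begin{smallmatrix}0&1\\-1&0\end{smallmatrix}\right)$ that interchanges the two diagonal entries, and hence they define the same point of $M(G)$, proving $\phi$ injective.

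The main obstacle I anticipate is precisely the reducible case: one must invoke the GIT dictionary identifying closed orbits with semisimple representations, so that each fiber of $\pi$ has a canonical diagonalizable representative, and then remove the pointwise ambiguity $\{\lambda,\lambda^{-1}\}=\{\mu,\mu^{-1}\}$ globally. The union-of-two-subgroups argument is the elementary but essential device that accomplishes the latter, while the former is the only nontrivial input borrowed from invariant theory.
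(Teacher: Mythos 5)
Your proof is correct, and its skeleton --- descending $\Psi$ through the GIT quotient, getting surjectivity for free, then splitting injectivity into the irreducible case (handled by \cite[Prop.\ 1.5.2]{CS}) and the reducible case (reduced to diagonal representations with the ambiguity $\{\lambda,\lambda^{-1}\}$) --- is the same as the paper's. You diverge in two respects, both to your advantage. First, where the paper takes an arbitrary reducible representation and passes to its semisimplification $\rho'\oplus\rho'{}^{-1}$, observing that it lies in the closure of the $\SL$-orbit, you invoke the GIT dictionary identifying points of $M(G)$ with closed orbits, i.e.\ with semisimple representations; this is the same invariant-theoretic input, packaged so that it is explicit why comparing diagonal representatives suffices to separate points of $M(G)$. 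Second --- and this is the substantive difference --- the paper passes from the pointwise identity $\rho'(g)+\rho'(g)^{-1}=\tilde\rho'(g)+\tilde\rho'(g)^{-1}$ directly to the global conclusion ``$\rho'=\tilde\rho'$ or $\rho'=\tilde\rho'{}^{-1}$'' with no justification, even though a priori the choice between $\tilde\rho'(g)=\rho'(g)$ and $\tilde\rho'(g)=\rho'(g)^{-1}$ could vary with $g$. Your observation that $G=\ker(\mu\lambda^{-1})\cup\ker(\mu\lambda)$ is a union of two subgroups, together with the elementary fact that a group is never the union of two proper subgroups, is exactly the missing step; so your write-up closes a small but genuine gap that the paper's own proof glosses over.
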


\begin{proof}
  The map $R(G)\to X(G)$ is algebraic and $\SL$-invariant, hence
  it descends to an algebraic map $\varphi:M(G)\to X(G)$. Let us
  see that $\varphi$ is a bijection.

  For $\rho$ an irreducible
  representation, if $\varphi(\rho)=\varphi(\rho')$ then $\rho$
  and $\rho'$ are equivalent representations; so they represent
  the same point in $M(G)$.

  Now suppose that $\rho$ is reducible. Consider $e_1\in \CC^2$ the
  common eigenvector of all $\rho(g)$. This gives a sub-representation
  $\rho':G\to \CC^*$ of $G$. We have a quotient representation
  $\rho''=\rho /\rho' :G\to \CC^*$, defined as the representation induced by
  $\rho$ in the quotient space $\CC^2/ \la e_1\ra$. As characters,
  $\rho''=\rho'{}^{-1}$.  The
  representation $\rho'\oplus \rho''$ is the \textit{semisimplification} of
  $\rho$. It is in the closure of the $\SL$-orbit through $\rho$.
  Clearly, $\chi_\rho(g)=\rho'(g)+\rho'(g)^{-1}$.
  Now if $\rho$ and $\tilde\rho$ are two reducible representations
  and $\varphi(\rho)=\varphi(\tilde\rho)$, then their
  semisimplifications have the same character, that is
   $$
   \chi_\rho(g)= \chi_{\tilde\rho}(g)\Rightarrow \rho'(g)+\rho'(g)^{-1}
   =\tilde\rho'(g)+\tilde\rho'(g)^{-1} \,.
   $$
  Therefore $\rho'=\tilde\rho'$ or $\rho'=\tilde\rho'{}^{-1}$. In
  either case $\rho$ and $\tilde\rho$ represent the same point
  in $M(G)$, which is actually the point represented by $\rho'\oplus
  \rho'^{-1}$.
\end{proof}

\section{Character varieties of torus knots} \label{sec:torus}

Let $T^2=S^1 \times S^1$ be the $2$-torus and consider the standard embedding
$T^2\subset S^3$. Let $m,n$ be a pair of coprime positive integers. Identifying
$T^2$ with the quotient $\RR^2/\ZZ^2$, the image of the straight line $y=\frac{m}{n}
x$ in $T^2$ defines the \textit{torus knot} of type $(m,n)$, which we shall denote
as $K_{m,n}\subset S^3$ (see \cite[Chapter 3]{Rolfsen}).

For any knot $K\subset S^3$, we denote by $G(K)$ the fundamental group of the exterior
$S^3-K$ of the knot. It is known that
 $$
  G_{m,n}= G(K_{m,n}) \cong \la x,y \, | \, x^m= y^n \,\ra \,.
 $$
The
purpose of this paper is to describe the character variety
$X(G_{m,n})$.

In \cite{Oller}, the character variety $X(G_{m,2})$ is computed.
We want to extend the result to arbitrary $m,n$, and give an
argument simpler than that of \cite{Oller}.

After the completion of this work, we became aware of the paper \cite{Martin-Oller}
where the character varieties of $X(G_{m,n})$ are determined (even without
the assumption of $m,n$ being coprime). However, our method is more direct
than the one presented in \cite{Martin-Oller}.

\bigskip

To start with, note that
 \begin{equation}\label{eqn:R}
 R(G_{m,n})= \{ (A,B)\in \SL \, | \, A^m= B^n\}\,.
 \end{equation}
Therefore we shall identify a representation $\rho$ with a pair of
matrices $(A,B)$ satisfying the required relation $A^m=B^n$.

We decompose the character variety
 $$
 X(G_{m,n})=X_{red} \cup X_{irr}\, ,
 $$
where $X_{red}$ is the subset consisting of the characters of
reducible representations (which is a closed subset by \cite{CS}),
and $X_{irr}$ is the closure of the subset consisting of the
characters of irreducible representations.

\begin{proposition}\label{prop:red}
 There is an isomorphism $X_{red}\cong \CC$. The correspondence is defined
 by
  $$
  \rho=\left(
  A=\left( \begin{array}{cc} t^n & 0 \\ 0 & t^{-n} \end{array}\right),
  B=\left( \begin{array}{cc} t^m & 0 \\ 0 & t^{-m} \end{array}\right)
  \right) \mapsto s=t+t^{-1} \in \CC \, .
  $$
\end{proposition}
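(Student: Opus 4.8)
The plan is to characterize reducible representations explicitly and then compute their characters, showing the resulting map to $\CC$ is an isomorphism. First I would recall from Lemma~\ref{lem:MyX} that on the reducible locus a character is determined by its semisimplification, so every point of $X_{red}$ comes from a diagonal representation $\rho' \oplus \rho'^{-1}$. Thus I may assume $A$ and $B$ are simultaneously diagonal (a common eigenvector exists by reducibility, and passing to the semisimplification diagonalizes both), writing $A = \mathrm{diag}(\alpha,\alpha^{-1})$ and $B = \mathrm{diag}(\beta,\beta^{-1})$ with $\alpha,\beta \in \CC^*$.

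Next I would impose the defining relation $A^m = B^n$ from \eqref{eqn:R}. Comparing the diagonal entries gives $\alpha^m = \beta^n$ (matching the other entry gives $\alpha^{-m} = \beta^{-n}$, the same condition). So the reducible representations are parametrized, up to equivalence, by pairs $(\alpha,\beta)$ with $\alpha^m = \beta^n$, modulo the simultaneous inversion $(\alpha,\beta)\mapsto(\alpha^{-1},\beta^{-1})$ that exchanges $\rho'$ and $\rho'^{-1}$. The key step is to solve $\alpha^m = \beta^n$ with $\GCD(m,n)=1$: since $m,n$ are coprime, there exist integers $p,q$ with $pm+qn=1$, and setting $t = \alpha^{q}\beta^{p}$ one checks $t^n = \alpha^{qn}\beta^{pn} = \alpha^{qn}\alpha^{pm} = \alpha$ and similarly $t^m = \beta$. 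Hence every solution is of the form $\alpha = t^n$, $\beta = t^m$ for a unique $t\in\CC^*$, which is exactly the normal form displayed in the statement.

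This identifies the set of reducible characters, before taking the inversion quotient, with $\CC^*$ via $t$. I would then pass to characters: by the computation in Lemma~\ref{lem:MyX}, $\chi_\rho(g) = \rho'(g) + \rho'(g)^{-1}$, and in particular the character is unchanged under $t \mapsto t^{-1}$ (which sends $\rho'$ to $\rho'^{-1}$). So the natural coordinate on $X_{red}$ is the invariant $s = t + t^{-1}$. The final step is to verify that $t \mapsto s = t+t^{-1}$ induces a bijection $\CC^*/(t\sim t^{-1}) \to \CC$ that is an isomorphism of algebraic varieties: it is clearly algebraic and surjective (every $s\in\CC$ has a preimage, the two roots of $t^2 - st + 1 = 0$), it is injective after the quotient since $t+t^{-1} = t'+t'^{-1}$ forces $t' \in \{t,t^{-1}\}$, and $s$ generates the ring of invariants $\CC[t,t^{-1}]^{\ZZ/2}$.

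The main obstacle I anticipate is not the algebra but being careful about what ``$X_{red}$'' means as a variety rather than just a set: one must confirm that $s = t + t^{-1}$ is genuinely the coordinate pulled back from the character map $\Psi$ (i.e. that $\tr A = t^n + t^{-n}$ and $\tr B = t^m + t^{-m}$ are polynomials in $s$, so that $s$ is itself a regular function on the character variety), and that the quotient by $t \sim t^{-1}$ coincides with the scheme structure $X(G_{m,n})$ inherits from $\CC^a$. Once the coordinate ring is identified with $\CC[s]$, the isomorphism $X_{red} \cong \CC$ follows.
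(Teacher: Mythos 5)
Your proof is correct and follows essentially the same route as the paper's: reduce to simultaneously diagonal pairs via Lemma~\ref{lem:MyX}, solve $\lambda^m=\mu^n$ by coprimality to get the normal form $\lambda=t^n$, $\mu=t^m$, and pass to the inversion-invariant coordinate $s=t+t^{-1}$. Your explicit B\'ezout computation ($t=\alpha^q\beta^p$ with $pm+qn=1$) and the identification of $\CC[t,t^{-1}]^{\ZZ/2}=\CC[s]$ simply fill in details the paper leaves implicit, so there is nothing to correct.
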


\begin{proof}
  By the discussion in Lemma \ref{lem:MyX}, an element in
  $X_{red}$ is described as the character of a split
  representations $\rho=\rho'\oplus \rho'^{-1}$. This means that
  in a suitable basis,
  \begin{equation}\label{eqn:red-pair}
  A=\left( \begin{array}{cc} \lambda & 0 \\ 0 & \lambda^{-1}
  \end{array}\right)\, , \,
  B=\left( \begin{array}{cc} \mu & 0 \\ 0 & \mu^{-1}
  \end{array}\right)\, .
  \end{equation}
  The equality $A^m=B^n$ implies $\lambda^m=\mu^n$. Therefore
  there is a unique $t\in \CC$ with $t\neq 0$ such that
   $$
     \left\{ \begin{array}{l} \lambda =t^n , \\ \mu=t^m . \end{array}\right.
   $$
  (Here we use the coprimality of $(m,n)$.) Note that
  the pair $(A,B)$ is well-defined up to permuting the two vectors in
  the basis. This corresponds to the change $(\lambda,\mu)\mapsto
 (\lambda^{-1},\mu^{-1})$, which in turn corresponds to $t\mapsto
 t^{-1}$. So $(A,B)$ is parametrized by $s=t+t^{-1}\in \CC$.
\end{proof}

\begin{lemma}\label{lem:red}
  Suppose that $\rho=(A,B)\in R(G_{m,n})$. In any of the following
  cases:
   \begin{enumerate}
   \item[(a)] $A^m=B^n\neq \pm \Id$,
   \item[(b)] $A=\pm \Id$ or $B=\pm \Id$,
   \item[(c)] $A$ or $B$ is non-diagonalizable,
   \end{enumerate}
  the representation $\rho$ is reducible.
\end{lemma}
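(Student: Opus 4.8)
The plan is to exploit the simple observation that $A$ and $B$ both commute with the matrix $C:=A^m=B^n$. Indeed, $A$ commutes with every power of itself, hence with $A^m=C$, and likewise $B$ commutes with $B^n=C$. Thus both $A$ and $B$ lie in the centralizer of $C$ inside $\SL$, and the whole lemma reduces to the structure of this centralizer: I want to show that in each of the three cases the centralizer of $C$ (or of whichever matrix is central) forces $A$ and $B$ to share a common eigenvector, which is exactly reducibility.

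Case (b) is immediate: if, say, $A=\pm\Id$, then $A$ is central, so any eigenvector of $B$ — and $B$ has one, since over $\CC$ its characteristic polynomial has a root — is automatically a common eigenvector of $A$ and $B$. I would next reduce case (c) to case (a). If $A$ is non-diagonalizable, then its repeated eigenvalue $\lambda$ satisfies $\lambda^2=\det A=1$, so after conjugation $A=\pm\left(\begin{smallmatrix}1&1\\0&1\end{smallmatrix}\right)$ and hence $A^m=(\pm1)^m\left(\begin{smallmatrix}1&m\\0&1\end{smallmatrix}\right)\neq\pm\Id$ because $m\geq1$; the same computation applies to $B$. Therefore $C=A^m=B^n\neq\pm\Id$, placing us in case (a).

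So the core is case (a), with $C\neq\pm\Id$, and here I split on whether $C$ is diagonalizable. If $C$ has two distinct eigenvalues, I pass to an eigenbasis, writing $C=\mathrm{diag}(\alpha,\alpha^{-1})$ with $\alpha\neq\alpha^{-1}$; the centralizer of such a matrix is exactly the diagonal torus, so $A$ and $B$ are simultaneously diagonal and share the eigenvector $e_1$. If $C$ is non-diagonalizable, then in a suitable basis $C=\pm\left(\begin{smallmatrix}1&c\\0&1\end{smallmatrix}\right)$ with $c\neq0$, and a direct computation of the commutant shows that every matrix commuting with $C$ is upper triangular of the form $\left(\begin{smallmatrix}a&b\\0&a\end{smallmatrix}\right)$; thus $A$ and $B$ again share the eigenvector $e_1$. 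In either subcase $\rho$ is reducible.

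The main thing to get right, rather than a genuine obstacle, is the centralizer computation in the non-diagonalizable (parabolic) case, together with the accompanying fact that a non-trivial unipotent raised to a positive power never returns to $\pm\Id$; this is precisely what makes the reduction of case (c) to case (a) valid. Once these are in place, the three cases assemble into the claim with only routine linear algebra.
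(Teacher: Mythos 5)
Your proof is correct, and it takes a genuinely different organization from the paper's, even though both arguments rest on the same elementary linear-algebra facts. The paper case-splits on the Jordan type of $A$ itself: for (a) it assumes $A$ is diagonalizable, so that $B^n=A^m$ is diagonal with distinct entries and $B$ is forced to be diagonal in the same basis; for (b) it uses $B^n=\pm\Id$ to conclude $B$ is diagonalizable and works in an eigenbasis of $B$; for (c) it computes $A^m$ explicitly and solves for the upper-triangular form of $B$ (and the non-diagonalizable possibility left open in its treatment of case (a) is silently covered by this case (c) computation). You instead take the commutation of $A$ and $B$ with $C=A^m=B^n$ as the organizing principle and split on the Jordan type of $C$: when $C\neq\pm\Id$, its centralizer is either the diagonal torus or the group $\left\{\left(\begin{smallmatrix}a&b\\0&a\end{smallmatrix}\right)\right\}$, and in both cases it preserves the line $\CC e_1$, so $A$ and $B$ share an eigenvector. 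This buys uniformity: your case (a) requires no hypothesis on $A$, so your reduction of (c) to (a) --- via the fact that a nontrivial unipotent, up to sign, never powers back to $\pm\Id$ --- is legitimate and non-circular, and case (b) becomes trivial because $\pm\Id$ is central, letting you replace the paper's step that $B^n=\pm\Id$ forces $B$ diagonalizable by simply picking any eigenvector of $B$. What the paper's more computational route buys is explicitness: it exhibits $B$ concretely in case (c) (e.g.\ the relation $\mu n x=\lambda m$), though nothing later in the paper uses that extra information.
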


\begin{proof}
First suppose that $A$ is diagonalizable with eigenvalues
$\lambda,\lambda^{-1}$, and suppose that $\lambda^m \neq \pm 1$.
Then there is a basis $e_1,e_2$ in which $A=\left(
\begin{array}{cc} \lambda & 0 \\ 0 & \lambda^{-1}
\end{array}\right)$, which is well-determined up to multiplication
of the basis vectors by non-zero scalars. Then
  $$
  B^n= A^m=  \left( \begin{array}{cc} \lambda^m & 0 \\ 0 & \lambda^{-m} \end{array}\right)
 $$
is a diagonal matrix, different from $\pm \Id$. Therefore $B$ must
be diagonal in the same basis, $B=\left( \begin{array}{cc} \mu & 0 \\
0 & \mu^{-1} \end{array}\right)$, with $\lambda^m=\mu^n$. This
proves the reducibility in case (a).

Now suppose that $A=\lambda\Id$, $\lambda= \pm 1$. Then $B^n=
\lambda^m \Id$, so it must be that $B$ is diagonalizable. Using
a basis in which $B$ is diagonal, we get the reducibility in
case (b).

Finally, suppose that $A$ is not diagonalizable. Then there is a
suitable basis on which $A$ takes the form $A=\left(
\begin{array}{cc} \lambda & 1 \\ 0 & \lambda
\end{array}\right)$, with $\lambda=\pm 1$. Clearly
  $$
  B^n= A^m= \lambda^m  \left( \begin{array}{cc} 1 & m\lambda \\ 0 & 1 \end{array}\right)
  $$
and so
 $$
 B= \left( \begin{array}{cc} \mu & x \\ 0 & \mu \end{array}\right)
 $$
with $\mu =\pm 1$, $\mu^n=\lambda^m$ and $\mu n x= \lambda m$. In
this basis, the vector $e_1$ is an eigenvector for both $A$ and
$B$. Hence the representation $(A,B)$ is reducible, completing the
case (c).
\end{proof}

\begin{proposition}\label{prop:irr}
Let $X_{irr}^o$ be the set of irreducible characters, and $X_{irr}$ its closure. Then
  \begin{eqnarray*}
  X_{irr}^o  &=&
 \{ (\lambda, \mu,r) \, | \, \lambda^m=\mu^n= \pm 1, \lambda\neq \pm 1,
 \mu \neq \pm 1, r\in \CC-\{0,1\} \} / \ZZ_2\x\ZZ_2\, , \\
  X_{irr} &=&
 \{ (\lambda, \mu,r) \, | \, \lambda^m=\mu^n= \pm 1, \lambda\neq \pm 1,
 \mu \neq \pm 1, r\in \CC \} / \ZZ_2\x\ZZ_2\, .
  \end{eqnarray*}
where $\ZZ_2\x\ZZ_2$ acts as $(\lambda,\mu,r)\sim (\lambda^{-1}, \mu, 1-r) \sim (\lambda,
\mu^{-1},1-r)\sim (\lambda^{-1},\mu^{-1},r)$.
\end{proposition}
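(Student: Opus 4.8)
The plan is to combine Lemma \ref{lem:red} with a normal-form computation that reduces each irreducible pair to the standard Fricke invariants $(\tr A,\tr B,\tr(AB))$, and then to repackage the third trace as the modulus $r$. First I would observe that if $\rho=(A,B)$ is irreducible then, by the contrapositive of Lemma \ref{lem:red}, we must have $A^m=B^n=\pm\Id$ with both $A,B$ diagonalizable and $A,B\neq\pm\Id$. Hence $A$ has eigenvalues $\lambda,\lambda^{-1}$ with $\lambda^m=\pm1$, $\lambda\neq\pm1$, and $B$ has eigenvalues $\mu,\mu^{-1}$ with $\mu^n=\pm1$, $\mu\neq\pm1$; the relation $A^m=B^n$ forces $\lambda^m=\mu^n$, i.e. the same sign. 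This produces the discrete data $(\lambda,\mu)$, finite in number since $(m,n)$ are coprime.

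Next I would conjugate so that $A=\mathrm{diag}(\lambda,\lambda^{-1})$ and write $B=\bigl(\begin{smallmatrix} a & b\\ c& d\end{smallmatrix}\bigr)$ with $a+d=\mu+\mu^{-1}$ and $ad-bc=1$. The residual gauge is the diagonal torus together with the Weyl swap, which fixes $a,d$ and the product $bc$; irreducibility is exactly $bc\neq0$ (otherwise $B$ shares an eigenvector with the diagonal $A$). A short computation gives $\tr(AB)=\lambda a+\lambda^{-1}d$, so $\tr(AB)$ is an affine coordinate on the single modulus $a$ because $\lambda\neq\lambda^{-1}$. One checks that $bc=0$ precisely when $a\in\{\mu,\mu^{-1}\}$, i.e. when $\tr(AB)\in\{\lambda\mu+\lambda^{-1}\mu^{-1},\ \lambda\mu^{-1}+\lambda^{-1}\mu\}$. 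I would therefore define
$$
r=\frac{\tr(AB)-\lambda\mu-\lambda^{-1}\mu^{-1}}{-(\lambda-\lambda^{-1})(\mu-\mu^{-1})}\,,
$$
normalized so that $r=0$ and $r=1$ are exactly the two reducible values; hence irreducibility is equivalent to $r\in\CC-\{0,1\}$.

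To obtain the bijection with the stated quotient I would invoke that equal characters together with irreducibility imply conjugacy \cite[Prop.\ 1.5.2]{CS}: for irreducible $\rho$ the triple $(\tr A,\tr B,\tr(AB))$, and hence $(\lambda,\mu,r)$, is a complete invariant. Surjectivity comes by reversing the construction: given admissible $(\lambda,\mu,r)$ with $r\neq0,1$, set $A=\mathrm{diag}(\lambda,\lambda^{-1})$, recover $a,d$ from $\tr(AB)$ and $a+d=\mu+\mu^{-1}$, and pick $b,c$ with $bc=ad-1\neq0$; then $A^m=B^n=\pm\Id$ holds by the eigenvalue constraints and $(A,B)$ is irreducible. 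The labeling involutions $\lambda\leftrightarrow\lambda^{-1}$ and $\mu\leftrightarrow\mu^{-1}$ leave $\tr(AB)$ unchanged but swap the two special values $\lambda\mu+\lambda^{-1}\mu^{-1}$ and $\lambda\mu^{-1}+\lambda^{-1}\mu$, so each sends $r\mapsto 1-r$ while their product fixes $r$; this is exactly the $\ZZ_2\x\ZZ_2$-action in the statement. For the closure, since for each of the finitely many $(\lambda,\mu)$ the open stratum is the affine line minus two points in the coordinate $r$, the Zariski closure simply fills in $r=0,1$, giving $r\in\CC$; these two boundary characters are limits of irreducible ones and coincide with the diagonal (reducible) characters of Proposition \ref{prop:red}, so they also lie in $X_{red}$.

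I expect the main obstacle to be the careful bookkeeping of the two eigenvalue-labeling involutions: verifying that they leave the character untouched while realizing precisely the involution $r\mapsto1-r$, so that the identifications are exactly $\ZZ_2\x\ZZ_2$ and no further coincidences occur between triples with distinct $(\lambda,\mu)$. The complementary check is that passing to the Zariski closure adds the points $r=0,1$ and nothing more.
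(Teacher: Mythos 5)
Your proof is correct, but it takes a genuinely different route from the paper's. The paper defines the modulus $r$ geometrically: it takes the eigenbases $\{e_1,e_2\}$ of $A$ and $\{f_1,f_2\}$ of $B$, views $\{[e_1],[e_2],[f_1],[f_2]\}$ as four points of $\PP^1$, notes that irreducibility is equivalent to the four points being distinct, and sets $r$ equal to their cross-ratio; the $\ZZ_2\x\ZZ_2$-action then appears tautologically as the permutations of each eigenbasis, and the closure is obtained by letting $f_1$ collide with $e_1$ or $e_2$, which exhibits the limiting representation as a semisimplification and immediately identifies the two boundary points inside $X_{red}\cong\CC$ via the equations $\lambda=t_1^n$, $\mu=t_1^m$ and $\lambda=t_0^n$, $\mu^{-1}=t_0^m$ (the paper's (\ref{eqn:A})--(\ref{eqn:B})). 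You instead put $A$ in diagonal normal form, parametrize $B$ by its entries, observe that the residual gauge leaves exactly $(a,d,bc)$ invariant with irreducibility equivalent to $bc\neq 0$, and take as modulus a normalization of $\tr(AB)$; injectivity then comes from \cite[Prop.\ 1.5.2]{CS}. This is essentially the Fricke-coordinate computation that the paper postpones to the proof of Theorem \ref{thm:main} (where $\tr(AB)$ is shown to be an affine function of the cross-ratio $r$), so your argument front-loads that computation and makes Theorem \ref{thm:main} nearly immediate; note your $r$ equals $1$ minus the paper's $r$, which is harmless since the involution $r\mapsto 1-r$ is quotiented out. What your route does not produce is the explicit identification of which two points of $X_{red}$ each line $X_{irr}^{k,k'}$ meets: the paper's proof derives (\ref{eqn:A})--(\ref{eqn:B}) precisely for this, and those formulas are used after the proposition to count the $(m-1)(n-1)$ intersection points and to show the two endpoints of each line are distinct. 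Your closure argument (fill in $r=0,1$, landing on reducible characters) suffices for the statement as given, but if you wanted to recover the full picture of Figure 1 you would still need to trace through which diagonal character each endpoint is, i.e.\ redo the semisimplification bookkeeping that the cross-ratio picture gives for free.
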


\begin{proof}
Let $\rho=(A,B)$ be an element of $R(G_{m,n})$ which is
an irreducible representation. By Lemma \ref{lem:red}, $A$ is
diagonalizable but not equal to $\pm \Id$, and $A^m=\pm \Id$. So the
eigenvalues $\lambda, \lambda^{-1}$ of $A$ satisfy $\lambda^m=\pm 1$ and
$\lambda\neq \pm 1$. Analogously, $B$ is diagonalizable but not
equal to $\pm \Id$, with eigenvalues $\mu, \mu^{-1}$, with
$\mu^n=\pm 1$, $\mu \neq \pm 1$. Moreover,
  $$
  \lambda^m=\mu^n \, .
  $$

We may choose a basis $\{e_1,e_2\}$ under which $A$ diagonalizes.
This is well-defined up to multiplication of $e_1$ and $e_2$ by
two non-zero scalars.
Let $\{f_1,f_2\}$ be a basis under which $B$ diagonalizes, which
is well-defined up to multiplication of $f_1$, $f_2$ by non-zero
scalars. Then $\{[e_1],[e_2],[f_1],[f_2]\}$ are four points of
the projective line $\PP^1=\PP(\CC^2)$. Note that the pair
$(A,B)$ is irreducible if and only if the four points are different.

The only invariant of four points in $\PP^1$ is the double ratio
 \begin{equation}\label{eqn:double-ration}
 r=([e_1]:[e_2]:[f_1]:[f_2]) \in \PP^1-\{0,1,\infty\}= \CC-\{0,1\} \, .
 \end{equation}
So $(A,B)$ is parametrized, up to the action of $\SL$, by $(\lambda,\mu,r)$.
Permuting the two basis vectors $e_1,e_2$ corresponds to
$(\lambda,\mu,r) \mapsto (\lambda^{-1},\mu, 1-r)$, since
 $$
 ([e_2]:[e_1]:[f_1]:[f_2]) =1-([e_1]:[e_2]:[f_1]:[f_2]).
 $$
Analogously, permuting the two basis vectors $f_1,f_2$ corresponds
to $(\lambda,\mu,r) \mapsto (\lambda, \mu^{-1}, 1-r)$. Note that this
gives an action of $\ZZ_2\x \ZZ_2$ and $X_{irr}^o$ is the quotient
of the set of $(\lambda,\mu,r)$ as above by this action.

To describe the closure of $X_{irr}^o$, we have to allow $f_1$ to
coincide with $e_1$. This corresponds to $r=1$ (the same happens if
$f_2$ coincides with $e_2$). In this case, $e_1$ is an
eigenvector of both $A$ and $B$, so the representation $(A,B)$
has the same character as its semisimplification $(A',B')$ given by
  $$
  A'=\left(\begin{array}{cc} \lambda & 0 \\ 0 & \lambda^{-1} \end{array}\right),
  B'=\left(\begin{array}{cc} \mu & 0 \\ 0 & \mu^{-1} \end{array}\right) .
 $$
This means that the point $(\lambda,\mu,1)$ corresponds under the identification
$X_{red}\cong\CC$ given by Proposition \ref{prop:red} to $s_1=t_1+t_1^{-1}$, where
$t_1\in \CC$ satisfies
 \begin{equation}\label{eqn:A}
 \left\{\begin{array}{l} \lambda=t_1^n , \\ \mu=t_1^m . \end{array}\right.
 \end{equation}
Also, we have to allow $f_1$ to coincide with $e_2$ (or $f_2$ to coincide with $e_1$).
This corresponds to $r=0$. The representation $(A,B)$ has semisimplification $(A',B')$
where
 $$
  A'=\left(\begin{array}{cc} \lambda & 0 \\ 0 & \lambda^{-1} \end{array}\right),
  B'=\left(\begin{array}{cc} \mu^{-1} & 0 \\ 0 & \mu \end{array}\right).
  $$
So the point $(\lambda,\mu,1)$ corresponds to $s_0=t_0+t_0^{-1}\in X_{red}\cong \CC$, where
$t_0\in \CC$ satisfies
 \begin{equation}\label{eqn:B}
 \left\{\begin{array}{l} \lambda=t_0^n , \\ \mu^{-1}=t_0^m . \end{array}\right.
 \end{equation}
\end{proof}

Proposition \ref{prop:irr} says that $X_{irr}$ is a collection of
$\frac{(m-1)(n-1)}2$ lines. A pair $(\lambda,\mu)$ with $\lambda^m=\pm 1$ and
$\mu^n=\pm 1$ is given as
 \begin{equation}\label{eqn:lm}
 \lambda=e^{\pi i k/m},  \ \mu=e^{\pi i k'/n},
 \end{equation}
where $0\leq k<2m$, $0\leq k'< 2n$. The condition $\lambda\neq\pm 1$, $\mu\neq
\pm 1$ gives $k\neq 0,m$, $k'\neq 0,n$. Finally, the $\ZZ_2\times \ZZ_2$-action
allows us to restrict to $0<k<m$, $0<k'<n$. The condition $\lambda^m=\mu^n$ means that
 $$
 k\equiv k' \pmod 2.
 $$

Denote by $X_{irr}^{k,k'}$ the line of $X_{irr}$ corresponding to the values of $k,k'$.
Then
 $$
 X_{irr}=\bigsqcup_{0<k<m,0<k'<n \atop k\equiv k' \pmod2} X_{irr}^{k,k'}\, .
 $$
The line $X_{irr}^{k,k'}$ intersects $X_{red}$ in two points. This gives a collection
of $(m-1)(n-1)$ points in $X_{red}$, which are defined as follows: under the
identification $X_{red}\cong \CC$, these are the points $s_l=t_l+t_l^{-1}$, where
 $$
 t_l=e^{\pi i l/nm},
 $$
and $0<l<mn$, $m \hspace{-1.5mm}\not| \, l$, $n\hspace{-1.5mm} \not| \, l$.
Assume that $n$ is odd (note that either $m$ or $n$ should be odd). Then
from (\ref{eqn:A}) and (\ref{eqn:B}),
the line $X_{irr}^{k,k'}$ intersects at the points $s_{l_0}, s_{l_1}\in X_{red}$
where
 \begin{eqnarray*}
 &  & n l_0 \equiv k \pmod m ,  \qquad m l_0 \equiv n-k' \pmod n \, ,\\
 &  & n l_1 \equiv k \pmod m ,  \qquad m l_1 \equiv k' \pmod n \, .
\end{eqnarray*}
These two points are different since $k'\not\equiv n-k' \pmod n$,
as $n$ is odd.

The following is a picture of $X(G_{m,n})$.

\begin{figure}[h]
\centering
\resizebox{7cm}{!}{\includegraphics{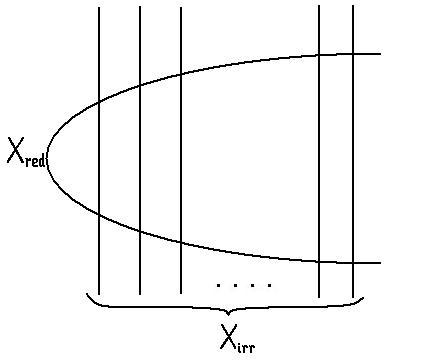}}    
\end{figure}
\centerline{\sc Figure 1}

In the case $(m,n)=(m,2)$, this result coincides with \cite[Corollary 4.2]{Oller}.

\section{The algebraic structure of $X(G_{m,n})$}\label{sec:alg}

We want to give a geometric realization of $X(G_{m,n})$ which
shows that the algebraic structure of this variety is that of a
collection of rational lines as in Figure 1 intersecting with nodal
curve singularities.

The map $R(G_{m,n})\to \CC^3$, $\rho=(A,B)\mapsto (\tr(A),\tr(B),\tr(AB))$,
defines a map
 $$
 \Psi:X(G_{m,n})\to \CC^3\, .
 $$

\begin{theorem} \label{thm:main}
 The map $\Psi$ is an isomorphism with its image $C=\Psi(X(G_{m,n}))$.
 $C$ is a curve consisting of $\frac{(n-1)(m-1)}2 +1$ irreducible
 components, all of them smooth and isomorphic to $\CC$. They intersect
 with nodal normal crossing singularities following the pattern in Figure 1.
\end{theorem}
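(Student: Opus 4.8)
The plan is to prove Theorem \ref{thm:main} by showing three things in sequence: that $\Psi$ is injective, that its image $C$ is exactly the union of the curves described by the explicit coordinate computations below, and that the singularities of $C$ are nodal. The overall geometric picture is already fixed by Proposition \ref{prop:red} and Proposition \ref{prop:irr}: $X(G_{m,n})$ is a copy of $\CC$ (the reducible locus $X_{red}$) together with $\frac{(m-1)(n-1)}{2}$ lines $X_{irr}^{k,k'}$, each meeting $X_{red}$ in two points. Since each component is abstractly isomorphic to $\CC$, the content of the theorem is that $\Psi$ embeds this configuration into $\CC^3$ without collapsing or creating new singularities, and that the crossings are transverse nodal crossings.

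First I would compute $\Psi$ explicitly on each component. On $X_{red}$, using the diagonal pair of Proposition \ref{prop:red} with $\lambda=t^n$, $\mu=t^m$, one has $\tr(A)=t^n+t^{-n}$, $\tr(B)=t^m+t^{-m}$, and $\tr(AB)=t^{n+m}+t^{-(n+m)}$, so the image of $X_{red}$ is the rational curve $\{(T_n(s/2)\text{-type expressions})\}$ parametrized by $s=t+t^{-1}$; I would record that these three coordinates are polynomials in $s$ and that the map $s\mapsto(\tr A,\tr B,\tr AB)$ is injective (e.g.\ because $\tr A=2\cos$ of an angle determines $t$ up to the $t\mapsto t^{-1}$ symmetry already quotiented out). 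On a fixed line $X_{irr}^{k,k'}$ the eigenvalues $\lambda,\mu$ are the constants fixed by $(k,k')$ as in (\ref{eqn:lm}), so $\tr(A)=\lambda+\lambda^{-1}$ and $\tr(B)=\mu+\mu^{-1}$ are \emph{constant} along the line, while $\tr(AB)$ varies linearly (affinely) with the cross-ratio coordinate $r$. A direct matrix computation with $A=\mathrm{diag}(\lambda,\lambda^{-1})$ and $B$ in general position gives $\tr(AB)$ as an affine-linear, non-constant function of $r$, so $\Psi$ restricted to each irreducible line is an embedding into the plane $\{\tr A=\lambda+\lambda^{-1},\,\tr B=\mu+\mu^{-1}\}$.

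Next I would establish global injectivity of $\Psi$. The key point is that distinct components map to distinct loci: two different irreducible lines $X_{irr}^{k,k'}$ and $X_{irr}^{\tilde k,\tilde k'}$ have distinct pairs $(\tr A,\tr B)$, since $(\lambda,\mu)$ is determined up to the $\ZZ_2\x\ZZ_2$-action by $(k,k')$ in the fundamental range $0<k<m$, $0<k'<n$, and the values $\lambda+\lambda^{-1}=2\cos(\pi k/m)$ are distinct for distinct $k$ in that range (likewise for $\mu$). Within a single line injectivity follows from the $r$-dependence of $\tr(AB)$ computed above; and no irreducible point can collide with a generic reducible point because $X_{red}$ meets each line only at the two computed intersection points $s_{l_0},s_{l_1}$, which correspond precisely to $r=0,1$. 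I would verify that at these boundary values the traces of the irreducible parametrization match the reducible traces at $s_{l_0},s_{l_1}$, confirming that the set-theoretic incidence of Proposition \ref{prop:irr} is faithfully reproduced in $C$.

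The main obstacle, and the last step, is to show the crossings are genuine nodal normal-crossing singularities rather than tangencies or worse. At an intersection point $s_l\in X_{red}$ where a line $X_{irr}^{k,k'}$ meets the reducible curve, I would compute the two tangent directions in $\CC^3$: the tangent to $X_{red}$ (obtained by differentiating the $s$-parametrization) and the tangent to the line (which lies in the plane $\tr A,\tr B=\text{const}$ and points in the $\tr(AB)$-direction). Transversality amounts to checking these two tangent vectors are linearly independent, i.e.\ that the tangent to $X_{red}$ is not contained in the constant-$(\tr A,\tr B)$ plane; equivalently that $\frac{d}{ds}(\tr A)$ and $\frac{d}{ds}(\tr B)$ do not both vanish at $s_l$. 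This is where the coprimality and the conditions $m\hspace{-1.5mm}\not|\,l$, $n\hspace{-1.5mm}\not|\,l$ enter: they guarantee $\lambda=t_l^n\neq\pm1$ and $\mu=t_l^m\neq\pm1$, so neither trace-derivative degenerates. I expect the delicate bookkeeping to be in confirming that at most two branches pass through any single point (so the crossings are ordinary nodes, not higher multiple points), which follows from the count that the $(m-1)(n-1)$ intersection points $s_l$ are distinct as $l$ ranges over its allowed values and each carries exactly one incoming line meeting $X_{red}$ transversally.
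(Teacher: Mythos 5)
Your overall plan (compute $\Psi$ component by component, prove injectivity, check transversality at the crossings) is the same as the paper's, and your treatment of the irreducible lines and of transversality is sound. But your first step contains a genuine error. You justify injectivity of $\Psi$ on $X_{red}$ by saying that $\tr A$ ``determines $t$ up to the $t\mapsto t^{-1}$ symmetry already quotiented out.'' This is false: $\tr A=t^n+t^{-n}$ is a degree-$n$ polynomial in $s=t+t^{-1}$, hence generically $n$-to-one in $s$; any $t'$ with $t'^n\in\{t^n,t^{-n}\}$ gives the same value. Worse, even the pair $(\tr A,\tr B)$ fails to separate points of $X_{red}$ exactly at the points that matter most. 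Concretely, for $(m,n)=(3,2)$ take $t=e^{\pi i/6}$ and $t'=-t=e^{7\pi i/6}$: then $t'^2=t^2$ and $t'^3=t^{-3}$, so $\tr A=1$ and $\tr B=0$ for both, yet $s=\sqrt3$ and $s'=-\sqrt3$ are distinct points of $X_{red}$ --- indeed they are precisely the two points where the unique line of $X_{irr}$ attaches, so identifying them would destroy the incidence pattern of Figure 1. Only the third coordinate $\tr(AB)=t^{n+m}+t^{-(n+m)}$ (equal to $-\sqrt3$ and $+\sqrt3$ respectively) distinguishes them. The correct argument, which is the paper's, must use all three coordinates simultaneously: from the unordered pairs $\{t^{\pm n}\}$, $\{t^{\pm m}\}$, $\{t^{\pm(n+m)}\}$ one recovers the \emph{correlated} pair $\{(t^n,t^m),(t^{-n},t^{-m})\}$, and then coprimality via B\'ezout ($an+bm=1$, so $t=(t^n)^a(t^m)^b$) recovers $\{t,t^{-1}\}$, i.e.\ $s$.

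A second gap: you never verify that $\Psi|_{X_{red}}$ is an \emph{immersion}, i.e.\ that $d\Psi_0/ds\neq 0$ at every point of $X_{red}$; you only check nondegeneracy of the first two coordinates at the crossing points $s_l$, which gives transversality there but says nothing about the rest of the component. This check is not cosmetic. The theorem asserts that all components of $C$ are smooth, and the passage from ``algebraic bijection'' to ``isomorphism'' also depends on it: an injective map germ with vanishing derivative produces a cuspidal image point (as for $t\mapsto(t^2,t^3)$), and a bijective morphism onto a cuspidal curve is not an isomorphism. The paper computes $d\Psi_0/dt$ explicitly, and the delicate points are $t=\pm 1$ (i.e.\ $s=\pm2$), where $ds/dt$ vanishes too, so one must check that $d\Psi_0/ds$ has entries $n t^{-n+1}\frac{t^{2n}-1}{t^2-1}$, etc., which at $t=\pm1$ equal $\pm n^2$, $\pm m^2$, $\pm(n+m)^2$ and are nonzero. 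Finally, your proposal stops at nodality and never states why injectivity plus nodality of $C$ forces the algebraic map $\Psi$ to be an isomorphism onto its image (the paper's closing remark); with the immersion property in hand this follows because nodes are the mildest curve singularities, but as written your three steps do not yet close the proof.
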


\begin{proof}
 Let us look first at $\Psi_0=\Psi|_{X_{red}}: X_{red}\to\CC^3$. For a
given $\rho=(A,B)\in X_{red}$, with the shape given in Proposition \ref{prop:red},
 we have that
 $$
 \Psi_0:s=t+t^{-1}\mapsto   (t^n+t^{-n},t^m+t^{-m},t^{n+m}+t^{-(n+m)})\, .
 $$
This map is clearly injective: the image recovers $\{t^n,t^{-n}\},
\{t^m,t^{-m}\}, \{t^{n+m},t^{-(n+m)}\}$.
>From this, we recover $\{(t^n,t^m),(t^{-n},t^{-m})\}$ and hence
the pair $t,t^{-1}$ (since $n,m$ are coprime).

Let us see that $\Psi_0$ is an immersion. The differential is
  \begin{equation} \label{eqn:dPsi/dt}
  \frac{d\Psi_0}{dt}= \left( n t^{-n-1}(t^{2n}-1) , m t^{-m-1}(t^{2m}-1) ,
  (n+m) t^{-n-m-1}(t^{2n+2m}-1) \right) .
  \end{equation}
This is non-zero at all $t\neq \pm 1$. As $\frac{ds}{dt}\neq 0$, we
have $\frac{d\Psi_0}{ds}\neq (0,0,0)$.
For $t=\pm 1$, we note that $\frac{ds}{dt}= t^{-2}(t^{2}-1)$, so
  $$
  \frac{d\Psi_0}{ds}= \left( n t^{-n+1}\frac{t^{2n}-1}{t^2-1} , m t^{-m+1}\frac{t^{2m}-1}{t^2-1} , (n+m)
  t^{-n-m+1}\frac{t^{2n+2m}-1}{t^2-1}\right) ,
  $$
which is non-zero again.

Now, consider a component of $X_{irr}$ corresponding to a pair $(\lambda,\mu)$.
Take $r\in \CC$. Fix the basis $\{e_1,e_2\}$ of $\CC^2$ which is given as the
eigenbasis of $A$. Let $\{f_1,f_2\}$ be the eigenbasis of $B$. As
the double ratio $(0:\infty:1:r/(r-1))=r$, we can take $f_1=(1,1)$ and
$f_2=(r-1,r)$. This corresponds to the matrices:
 \begin{eqnarray*}
  A &=&\left(\begin{array}{cc} \lambda & 0 \\ 0 & \lambda^{-1} \end{array}\right), \\
  B &=&\left(\hspace{-1mm}\begin{array}{cc} 1 & r-1 \\ 1 & r \end{array}\hspace{-1mm}\right)
  \left(\hspace{-1mm}\begin{array}{cc} \mu & 0 \\ 0 & \mu^{-1} \end{array}\hspace{-2mm}\right)
  \left(\hspace{-1mm}\begin{array}{cc} 1 & r-1 \\ 1 & r \end{array}\hspace{-1mm}\right)^{-1} =
  \left(\hspace{-1mm}\begin{array}{cc} r(\mu-\mu^{-1})+\mu^{-1} & (1-r)(\mu-\mu^{-1})
  \\ r(\mu-\mu^{-1}) & \mu-r(\mu-\mu^{-1}) \end{array}\hspace{-1mm}\right) \, .
  \end{eqnarray*}
Therefore:
 $$
 \Psi(A,B)=(\tr(A),\tr(B),\tr(AB))=
 (\lambda+\lambda^{-1}, \mu^{-1}+\mu, (\lambda \mu^{-1}+\lambda^{-1} \mu) +
  r (\lambda -\lambda^{-1}) (\mu-\mu^{-1})).
  $$
The image of this component is a line in $\CC^3$. Its direction vector is $(0,0,1)$.
At an intersection point with $\Psi_0(X_{red})$, the tangent vector to
$\Psi_0(X_{red})$, given in (\ref{eqn:dPsi/dt}), has non-zero first and second component,
since $\lambda=t^{n}$, $\mu=t^{m}$ and $t\neq 0$, $\lambda^2\neq 1$, $\mu^2\neq 1$.
So the intersection of these components is a transverse nodal singularity.

  Finally, note that the map $\Psi:X(G_{m,n})\to C$ is an algebraic
  map, it is a bijection, and $C$ is a nodal curve (the mildest possible type
  of singularities). Therefore $\Psi$ must be an isomorphism.
\end{proof}

\begin{corollary}
  $M(G)\cong X(G)$, for $G=G_{m,n}$.
\end{corollary}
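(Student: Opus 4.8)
The plan is to upgrade the bijection of Lemma~\ref{lem:MyX} to an isomorphism by exploiting the isomorphism $\Psi$ of Theorem~\ref{thm:main}. Write $\varphi\colon M(G)\to X(G)$ for the bijective morphism of Lemma~\ref{lem:MyX}, so that $\Phi:=\Psi\circ\varphi\colon M(G)\to C$ is an algebraic bijection onto the reduced nodal curve $C\subset\CC^3$. Since $\Psi$ is already an isomorphism, it suffices to prove that $\Phi$ is an isomorphism, for then $\varphi=\Psi^{-1}\circ\Phi$ is the desired one. I would stress at the outset that bijectivity of $\Phi$ is \emph{not} enough on its own: the normalization of a cuspidal cubic $t\mapsto(t^2,t^3)$ is a bijective morphism that fails to be an isomorphism, so the argument must use the scheme structure of $M(G)$ and not merely its points.

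First I would recall that $M(G)=R(G)/\!/\SL=\mathrm{Spec}\,\CC[R(G)]^{\SL}$ is affine, and that $\Phi$ is precisely the morphism induced by the three invariant functions $\tr A$, $\tr B$, $\tr(AB)$ on $R(G_{m,n})$. The main input is the classical Fricke--Vogt theorem: for pairs $(A,B)$ in $\SL$, every $\SL$-invariant polynomial is a polynomial in $\tr A$, $\tr B$ and $\tr(AB)$. Consequently $\CC[R(G)]^{\SL}$ is generated by these three traces, which says exactly that $\Phi$ is a closed immersion $M(G)\hookrightarrow\CC^3$. Its set-theoretic image is $C$, since $\Phi$ is bijective onto $C$ by the previous paragraph.

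To finish I would argue that $M(G)$ is reduced, so that the closed subscheme $\Phi(M(G))$ equals the reduced curve $C$ and not some thickening of it. Granting this, $\CC[M(G)]=\CC[x,y,z]/J$ with $J$ a radical ideal and $V(J)=C$, forcing $J=I(C)$ and hence $M(G)\cong C\cong X(G)$. Reducedness would follow from reducedness of $R(G)$, since $\CC[R(G)]^{\SL}$ is a subring of $\CC[R(G)]$ and any nilpotent of the subring is a nilpotent of $\CC[R(G)]$; alternatively it can be read off from the explicit local models at the smooth points and nodes supplied by Propositions~\ref{prop:red} and~\ref{prop:irr} and Theorem~\ref{thm:main}.

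The step I expect to be the main obstacle is controlling the scheme structure at the nodes of $C$, i.e.\ establishing reducedness of $M(G)$ there and that the three trace functions separate the two branches to first order. Away from the nodes the claim is routine: on the irreducible locus the $\SL$-action has closed orbits with stabilizer $\{\pm\Id\}$, so $M(G)$ is a geometric quotient mapping isomorphically onto each smooth component of $X_{irr}$ (by Proposition~\ref{prop:irr}), and along $X_{red}\cong\CC$ the diagonal representatives of Proposition~\ref{prop:red} give a clean slice. It is precisely where a reducible character arises as a limit of irreducible ones that one must check the GIT quotient acquires no embedded or nilpotent structure, and this is what the Fricke--Vogt presentation together with the transversality of the two branches computed in Theorem~\ref{thm:main} is designed to guarantee.
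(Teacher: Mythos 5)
Your proof is correct, but it takes a genuinely different route from the paper's. The paper's own proof is a one-liner: by Lemma~\ref{lem:MyX} the map $\varphi:M(G)\to X(G)$ is an algebraic bijection, and since the singularities of $X(G)$ are just transverse nodes (Theorem~\ref{thm:main}), $\varphi$ must be an isomorphism --- the same seminormality principle (``a bijective morphism onto a nodal curve is an isomorphism'') that already closes the proof of Theorem~\ref{thm:main}. You avoid invoking that principle a second time: instead you show $\Phi=\Psi\circ\varphi$ is a \emph{closed immersion} via the Fricke--Vogt theorem, note that $M(G)$ is reduced because $\CC[R(G)]^{\SL}$ is a subring of the reduced ring $\CC[R(G)]$, and conclude by the Nullstellensatz that the scheme-theoretic image is exactly the reduced curve $C$, whence $\varphi=\Psi^{-1}\circ\Phi$ is an isomorphism. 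What your approach buys is a statement the paper never makes explicit: the invariant ring $\CC[R(G)]^{\SL}$ is generated by $\tr A$, $\tr B$, $\tr(AB)$, so $M(G)$ itself embeds in $\CC^3$; it also makes visible the reducedness hypothesis that the paper's one-liner uses tacitly. Two caveats: (i) the deduction ``Fricke--Vogt for $\SL^2$ implies the three traces generate $\CC[R(G)]^{\SL}$'' silently uses that the restriction map $\CC[\SL^2]^{\SL}\to\CC[R(G)]^{\SL}$ is surjective, which holds because $\SL$ is linearly reductive in characteristic zero (Reynolds operator); this should be stated, as it is the only nontrivial glue in the argument. (ii) Your closing worry about ``controlling the scheme structure at the nodes'' is moot: once you have a closed immersion with reduced source whose image set is $C$, no local analysis at the nodes is needed. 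Note also that your argument still leans on Theorem~\ref{thm:main} for $\Psi$ being an isomorphism, so the nodal-curve principle is removed only from this corollary, not from the paper as a whole.
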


\begin{proof}
  By Lemma \ref{lem:MyX}, $\varphi:M(G)\to X(G)$ is an algebraic map which is a bijection.
  As the singularities of $X(G)$ are just transverse nodes, $\varphi$ must be an
  isomorphism.
\end{proof}


\begin{thebibliography}{MMMM}

\bibitem{CS} \textsc{Culler, M.; Shalen, P.}: Varieties of group representations and
splitting of $3$-manifolds. \textsl{Annals of Math.} \textbf{117} (1983) 109--146.

\bibitem{Martin-Oller} \textsc{Mart\'{\i}n-Morales, J.; Oller-Marc\'en, A-M.}: 
On the varieties of representations and characters of
a family of one-relator subgroups. \texttt{arXiv:0805.4716}.

\bibitem{Oller} \textsc{Oller-Marc\'en, A-M.}: The $\SL$ character
variety of a class of torus knots. \textsl{Extracta Math.} To
appear.

\bibitem{Rolfsen} \textsc{Rolfsen, D.}: \textsl{Knots and links}.
Publish or Perish, Houston 1990.


\end{thebibliography}
\end{document}